\setlist[itemize]{topsep=0ex,itemsep=0ex,parsep=0.4ex}
\setlist[enumerate]{topsep=0ex,itemsep=0ex,parsep=0.4ex}
\crefname{lem}{Lemma}{Lemmas}
\crefname{thm}{Theorem}{Theorems}
\crefname{cor}{Corollary}{Corollaries}
\crefname{prop}{Proposition}{Propositions}
\crefname{conj}{Conjecture}{Conjectures}
\crefname{obs}{Observation}{Observations}
\crefname{quest}{Question}{Questions}
\crefname{open}{Open Problem}{Open Problems}
\crefname{claim}{Claim}{Claims}
\crefname{equation}{}{}
\newcounter{thmcounter}
\renewcommand{\thefootnote}{\fnsymbol{footnote}}
\newtheorem{thm}[thmcounter]{Theorem}
\newtheorem{lem}[thmcounter]{Lemma}
\newtheorem{obs}[thmcounter]{Observation}
\newcounter{claimnumber}[thmcounter]
\newtheorem{claim}{Claim}[claimnumber]
\theoremstyle{definition}
\newtheorem{conj}[thmcounter]{Conjecture}
\DeclarePairedDelimiter{\floor}{\lfloor}{\rfloor}
\DeclarePairedDelimiter{\ceil}{\lceil}{\rceil}
\renewcommand{\geq}{\geqslant}
\renewcommand{\leq}{\leqslant}
\DeclareMathOperator{\had}{had}
\newcommand{\E}{\mathbb{E}}
\renewcommand{\Pr}{\mathbb{P}}
\DeclareMathOperator{\Var}{Var}
\DeclareMathOperator{\Cov}{Cov}
\DeclareMathOperator{\HC}{HC}
\DeclareMathOperator{\cm}{cm}
\renewcommand{\thefootnote}{\fnsymbol{footnote}}
\def\NAT@spacechar{~}
\newcommand{\defn}[1]{\textcolor{green!50!black}{\textit{#1}}}
\newcommand{\mathdefn}[1]{\textcolor{green!50!black}{#1}}
\newcommand{\conjdefn}[1]{\textcolor{red!50!black}{#1}}
\begin{document}

\author{
    Jung Hon Yip\footnotemark[2]}

\footnotetext[2]{School of Mathematics, Monash University, Melbourne, Australia, \texttt{junghon.yip@monash.edu}. Supported by a Monash Graduate Scholarship.}


\title{\bf\boldmath Dense Matchings of Linear Size in Graphs \\ with Independence Number 2}

\maketitle

\begin{abstract}
    For a real number $c > 4$, we prove that every graph $G$ with $\alpha(G) \leq 2$ and $|V(G)| \geq ct$ has a matching $M$ with $|M| = t$ such that the number of non-adjacent pairs of edges in $M$ is at most:
    \begin{equation*}
        \left( \frac{1}{c\left(c-1\right)^2} + O_c\left(t^{-1/3} \right) \right) \binom{t}{2}.
    \end{equation*}
    This is related to an open problem of Seymour (2016) about Hadwiger's Conjecture, who asked if there is a constant $\varepsilon > 0$ such that every graph $G$ with $\alpha(G) \leq 2$ has $\had(G) \geq (\frac{1}{3} + \varepsilon) |V(G)|$.
\end{abstract}

\renewcommand{\thefootnote}
{\arabic{footnote}}


\section{\boldmath Introduction}
\label{s:intro}
A graph $H$ is a \defn{minor} of a graph $G$ if $H$ can be obtained from $G$ by deleting vertices and edges, and by contracting edges. The maximum integer $n$ such that the complete graph $K_n$ is a minor of $G$ is called the \defn{Hadwiger number} of $G$, denoted \defn{$\had(G)$}. A \defn{proper colouring} of a graph $G$ assigns a colour to each vertex so that adjacent vertices receive distinct colours. The minimum integer $k$ such that $G$ has a proper colouring with $k$ colours is called the \defn{chromatic number} of $G$, denoted \defn{$\chi(G)$}. The cardinality of the largest set of pairwise non-adjacent vertices is called the \defn{independence number} of $G$, denoted \defn{$\alpha(G)$}. For any standard graph theory notation not defined above, see \citet{Diestel05}\footnote{Unless stated otherwise, all graphs are simple and finite.}.

Hadwiger's Conjecture is widely considered to be one of the deepest unsolved problems in graph theory~\citep{Hadwiger43}. See \citep{Kawa15,Toft96,SeymourHC,CV20} for surveys. It asserts that $\had(G) \geq \chi(G)$ for every graph $G$. Since $\chi(G) \alpha(G) \geq |V(G)|$ for every graph $G$, Hadwiger's Conjecture would imply that $\had(G) \geq |V(G)|/\alpha(G)$. However, it is open if $\had(G) \geq |V(G)|/\alpha(G)$ for every graph $G$. Several results in this direction are known \citep{DM82,Fox10,BK11,bohme2011minors,KawaHadwigerOdd07,FradkinClique12}.  A classical result of \citet{DM82} states that:
\begin{thm}[\citet{DM82}]
    \label{thm:dm}
    For every graph $G$, $\had(G) \geq  |V(G)|/ (2\alpha(G) -1)$.
\end{thm}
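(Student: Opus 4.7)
The plan is to prove the Duchet--Meyniel bound by an extremal argument on the branch sets of a $K_k$-minor. Set $n = |V(G)|$ and $\alpha = \alpha(G)$. Consider families $\mathcal{B} = (B_1, \ldots, B_k)$ of pairwise disjoint, non-empty, connected subsets of $V(G)$ such that for each pair $i \neq j$, some edge of $G$ has one endpoint in $B_i$ and the other in $B_j$; such a family witnesses a $K_k$-minor of $G$. Among all such families I would choose one with $k$ maximum, and subject to that, with $\sum_i |B_i|$ maximum. The target is to show $k \geq n/(2\alpha-1)$.

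The first routine step is to eliminate leftover vertices. If some $u \in V(G) \setminus \bigcup_i B_i$ were adjacent in $G$ to some $B_i$, then replacing $B_i$ by $B_i \cup \{u\}$ would preserve connectedness of every branch set and all inter-branch adjacencies, while strictly increasing $\sum_i |B_i|$ --- contradicting the choice of $\mathcal{B}$. Hence no vertex outside $\bigcup_i B_i$ is adjacent to any $B_i$; reducing to a single component of $G$ (and arguing each component separately), I may assume $V(G) = \bigcup_i B_i$, so $n = \sum_i |B_i|$.

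The heart of the proof is to establish that each branch set satisfies $|B_i| \leq 2\alpha - 1$, from which $n = \sum_i |B_i| \leq k(2\alpha-1)$ immediately yields the desired inequality. I would argue this by contradiction: suppose some $|B_i| \geq 2\alpha$, and produce a $K_{k+1}$-minor by splitting $B_i$ into two non-empty connected subsets $B_i^1, B_i^2$ with an edge between them, such that each $B_i^r$ is still adjacent to every other $B_j$ ($j \neq i$). For each $j \neq i$ let $A_j = \{v \in B_i : v \text{ has a neighbour in } B_j\}$; the split must meet each $A_j$ on both sides. Using $|B_i| \geq 2\alpha$, the connectedness of $G[B_i]$, and the fact that $\alpha(G[B_i]) \leq \alpha$, I would construct the partition along an edge of a spanning tree of $G[B_i]$ while certifying that each $A_j$ is split.

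The main obstacle is precisely this splitting step, particularly when $|A_j| = 1$ for some $j$: the unique contact vertex cannot be placed on both sides of a split. My plan to handle this is to first migrate such a singleton contact vertex $w$ out of $B_i$ into the adjacent $B_j$ (which stays connected because $w$ has a neighbour in $B_j$), after verifying that $B_i \setminus \{w\}$ remains connected and retains contact with every other branch set. Iterating this cleanup, the hypothesis $|B_i| \geq 2\alpha$ together with $\alpha(G) \leq \alpha$ should provide enough slack --- essentially because any $\alpha$ pairwise non-adjacent vertices inside $B_i$ form a forbidden independent set --- to reach a configuration in which every $A_j$ has size at least $2$ and the clean spanning-tree split goes through. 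Controlling the interaction between these vertex migrations and the connectedness of $B_i$ (ensuring no cascading loss of contact with some $B_\ell$) is the delicate technical point, and I expect it to require a carefully chosen invariant, such as choosing $w$ to be a non-cut vertex of $G[B_i]$ and prioritising migrations that strictly reduce a suitable potential.
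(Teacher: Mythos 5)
The paper does not prove \cref{thm:dm}; it cites \citet{DM82} and moves on, so there is no ``paper's own proof'' to compare against. Judged on its own terms, your proposal departs substantially from the standard argument and, more importantly, has a gap that you yourself flag but that I do not think is merely a technical detail.

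The classical proof is a short greedy/inductive one. For a connected graph $G$ with independence number~$\alpha$, one builds a connected dominating set~$D$ of size at most $2\alpha-1$: start with $D=\{v\}$, and while some vertex is undominated pick an undominated vertex $u$ at distance exactly $2$ from $D$ together with a common neighbour $w$, and add both $u$ and $w$. The $u$'s added are pairwise non-adjacent, so there are at most $\alpha-1$ iterations, giving $|D|\leq 2\alpha-1$ with $G[D]$ connected and $D$ dominating. Taking $D$ as one branch set and inducting on $G-D$ (which has $\alpha(G-D)\leq\alpha$) yields $\had(G)\geq 1+\had(G-D)\geq 1 + (n-(2\alpha-1))/(2\alpha-1) = n/(2\alpha-1)$; the disconnected case follows from $\max_i n_i/(2\alpha_i-1) \geq n/(2\alpha-1)$. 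This is clean because the bound $2\alpha-1$ arises \emph{additively} from a greedy construction, not from trying to bound branch sets in an otherwise arbitrary extremal decomposition.

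Your plan instead fixes a $K_k$-minor with $k$ maximum and then $\sum_i|B_i|$ maximum, and tries to show every $|B_i|\leq 2\alpha-1$ by splitting a large $B_i$. The cleanup/leftover step is fine, but the splitting step has a real hole. When some contact set $A_j$ is a singleton $\{w\}$, you propose migrating $w$ into $B_j$. However: (i) this migration is only legal if $w$ is not a cut vertex of $G[B_i]$ and if $w$ is not simultaneously the unique contact vertex for some other $B_{j'}$, and nothing in your setup guarantees a suitable $w$ exists; (ii) more fundamentally, each migration strictly decreases $|B_i|$, so after at most one or two migrations the hypothesis $|B_i|\geq 2\alpha$ that was supposed to supply ``slack'' is gone, and you are simply sitting at another extremal configuration with no contradiction derived; (iii) it is not argued that the migration process terminates in a configuration with all $|A_j|\geq 2$ rather than getting stuck at a cut-vertex obstruction. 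The appeal to ``$\alpha$ pairwise non-adjacent vertices forming a forbidden independent set'' is not connected to the combinatorics of the $A_j$'s or of cut vertices in $G[B_i]$, so it does not play the role you want it to. In short, the extremal-decomposition route is not a minor rephrasing of Duchet--Meyniel; it requires a genuinely new structural lemma about extremal branch sets that you have not established, whereas the greedy connected-dominating-set argument sidesteps all of these difficulties.
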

When $\alpha(G) = 2$, \cref{thm:dm} implies $\had(G) \geq |V(G)|/3$, but in this case Hadwiger's Conjecture asserts that $\had(G) \geq |V(G)|/2$. This gap is large, and thus the restriction of Hadwiger's Conjecture to graphs with independence number $2$ is interesting.  For convenience, we denote this special case \nameref{conj:hc_chi}.

\begin{conj}[\conjdefn{$\HC_{\alpha = 2}$}]
    For every graph $G$ with $\alpha(G) \leq 2$, $\had(G) \geq \chi(G)$. \label{conj:hc_chi}
\end{conj}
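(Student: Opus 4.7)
The plan is to prove $\HC_{\alpha = 2}$ by constructing a $K_{\chi(G)}$-minor of $G$ directly, using the identity $\chi(G) = n - m$ that holds because every proper colour class has size at most $\alpha(G) \le 2$ and the size-$2$ classes form a matching in the complement $\overline{G}$; here $n = |V(G)|$ and $m$ is the matching number of $\overline{G}$. The task is therefore to exhibit $n - m$ vertex-disjoint, $G$-connected, pairwise $G$-adjacent branch sets in $G$.

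First I would fix a maximum matching $M = \{a_1 b_1, \ldots, a_m b_m\}$ in $\overline{G}$ and let $U = V(G) \setminus V(M)$; by maximality $U$ is independent in $\overline{G}$, hence a $G$-clique of size $n - 2m$. The critical structural input is that $\alpha(G) \le 2$ is equivalent to $\overline{G}$ being triangle-free, which forces, for every $i \ne j$, at least two of the four edges between $\{a_i, b_i\}$ and $\{a_j, b_j\}$ to lie in $E(G)$ (else $\{a_i, b_i, a_j\}$ or $\{a_i, b_i, b_j\}$ would be a $\overline{G}$-triangle), and similarly each $u \in U$ is $G$-adjacent to at least one vertex of $\{a_i, b_i\}$. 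These pair-adjacencies are exactly what is required for the contracted branch-set family to span a clique in the minor, so the entire difficulty reduces to choosing, for each matching pair $\{a_i, b_i\}$, either (a) a singleton representative in $\{a_i, b_i\}$ such that the chosen representatives together with $U$ form a $G$-clique, or (b) an enlargement of $\{a_i, b_i\}$ to a $G$-connected set via a common $G$-neighbour drawn from another pair's vertex pool, subject to the global constraint that the resulting $n - m$ branch sets are vertex-disjoint.

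The main obstacle is that neither option in isolation suffices. Option (a) asks for an independent transversal of $M$ in $\overline{G}$ that is additionally anti-adjacent to $U$, which can fail for certain dense triangle-free complements: for instance when $\overline{G} = C_5$, every transversal either contains a $\overline{G}$-edge or is $\overline{G}$-adjacent to $U$. Option (b) fails when the common $G$-neighbourhood $N_G(a_i) \cap N_G(b_i)$ is empty, which by the Erd\H{o}s inequality $d_{\overline{G}}(a) + d_{\overline{G}}(b) \le n$ for triangle-free graphs occurs exactly when the inequality is saturated at the edge $a_i b_i$; if this happens for every matching edge, $\overline{G}$ is forced to be complete bipartite, in which case the larger side is itself a $G$-clique of size $\ge n - m$ and serves directly as $\chi(G)$ singleton branch sets. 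My plan is to interpolate between options (a) and (b) via an auxiliary bipartite graph $H$ whose vertices are the $m$ matching pairs on one side and the available branch-set ``slots'' on the other, and to find a system of distinct representatives in $H$ by a Hall-type argument driven in the generic case by the paper's dense-matching theorem. The hardest situation --- and in my view the combinatorial heart of Seymour's open problem --- is when $\overline{G}$ has simultaneously odd-cycle obstructions (where option (a) fails) and near-saturated degree sums (where option (b) fails), forcing one to abandon matching-based branch sets in favour of longer $G$-path contractions (as in the $C_5$ example, where the $K_3$-minor comes from contracting a path of length two rather than a matching edge); closing this case uniformly appears to require a Gallai--Edmonds-style structural decomposition of $\overline{G}$ combined with the paper's dense-matching machinery in a non-trivial way, and it is here that I expect a substantial gap in the sketch.
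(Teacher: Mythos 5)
The statement you are trying to prove is \nameref{conj:hc_chi}, which the paper presents as a \emph{conjecture} --- the independence-number-$2$ case of Hadwiger's Conjecture --- and explicitly describes as a key open problem. The paper contains no proof of it; its actual contribution (\cref{thm:main}) is a much weaker statement about matchings that are dense but not connected, in graphs of order $ct$ for $c>4$. So there is no ``paper proof'' to compare against, and your proposal cannot be correct as a proof: it is a framing of the problem together with an admitted gap.

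To be concrete about where the gap sits. Your reduction is sound as far as it goes: for $\alpha(G)\le 2$ one does have $\chi(G)=n-\nu(\overline{G})$, the cross-pair adjacency count is right (triangle-freeness of $\overline{G}$ forces at least two $G$-edges between any two $\overline{G}$-matching pairs), and the $C_5$ example correctly shows that branch sets cannot always be taken to be matched pairs or singletons. But the entire difficulty of \nameref{conj:hc_chi} is concentrated in the case you defer: when for many pairs $\{a_i,b_i\}$ the common $G$-neighbourhood is empty \emph{and} no independent transversal exists, so the branch sets must be longer $G$-connected sets, and disjointness plus pairwise adjacency of $n-m$ such sets is exactly what nobody knows how to guarantee. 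The ``Hall-type argument driven by the paper's dense-matching theorem'' cannot close this: \cref{thm:main} produces a matching of size $t$ with $|V(G)|\ge ct$ vertices for $c>4$, i.e.\ at best a structure of size about $n/4$, whereas you need $n-m\ge n/2$ branch sets; moreover its matching has a positive fraction of non-adjacent pairs, whereas a clique minor tolerates none. Even the full \nameref{conj:linear_cm} would only yield \nameref{conj:hc_epsilon}, i.e.\ $\had(G)\ge(\frac13+\varepsilon)n$, still far from $\chi(G)\ge n/2$. Also, your side claim that saturation of $d_{\overline{G}}(a)+d_{\overline{G}}(b)\le n$ at every matching edge forces $\overline{G}$ to be complete bipartite is asserted without proof and is not obvious. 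In short, the proposal restates the known reduction of \nameref{conj:hc_chi} to a branch-set selection problem and then stops exactly where the open problem begins.
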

\nameref{conj:hc_chi} is a key open case of Hadwiger's Conjecture~\citep{PST03,CS12,Blasiak07,CO08,Bosse19,NS26a}. Every graph $G$ with $\alpha(G) = 2$ has $\chi(G) \geq |V(G)|/2$, but the weakening of \nameref{conj:hc_chi} remains open:
\begin{conj}[\conjdefn{$\HC_{n/2}$}]
    For every graph $G$ with $\alpha(G) \leq 2$, $\had(G) \geq |V(G)|/2$. \label{conj:hc_half}
\end{conj}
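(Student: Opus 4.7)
The plan is to attack $\HC_{n/2}$ by exhibiting a collection of $\lceil n/2 \rceil$ pairwise-disjoint connected ``branch sets'' in $G$ such that every pair has an edge of $G$ between them; contracting each branch set then yields a $K_{\lceil n/2 \rceil}$ minor. The key structural resource is that $\alpha(G) \leq 2$ forces the complement $\overline{G}$ to be triangle-free, so any three vertices in $G$ span at least one $G$-edge; this is what makes branch sets of average size just above $2$ plausibly achievable, and is what makes the matching-based approach of the paper natural.

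The first step is to produce a near-perfect matching $M$ in $G$, so that its edges serve as initial candidate branch sets of size $2$. Since $\alpha(G) \leq 2$, the graph $G - S$ has at most two components for every $S \subseteq V(G)$, and Tutte's criterion (or its defect version due to Berge) easily yields a matching leaving at most a constant number of vertices uncovered. The second and core step is to repair the non-adjacencies in $M$: for each pair of matching edges $e_i, e_j$ with no $G$-edge between them, one must either shift $M$ or merge branch sets. The main theorem of this paper supplies, for $c > 4$, a matching of size $t \approx n/c$ with at most $\bigl(\tfrac{1}{c(c-1)^2} + o(1)\bigr)\binom{t}{2}$ non-adjacent pairs, so a natural plan is to push $t$ upward toward $n/2$ and resolve each bad pair $(e_i, e_j)$ by absorbing its four vertices into a single larger branch set --- for example, piggy-backing $e_i$ onto a mutually adjacent matching edge $e_k$ and then using the freed vertices of $e_i$ as a connector between $e_k$ and $e_j$.

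The hard part is the arithmetic of the trade-off made explicit by the paper's theorem: increasing $t$ toward $n/2$ appears to allow a constant fraction of non-adjacent pairs, and each merge-type repair shrinks the branch-set count by at least one, so a naive find-and-repair strategy cannot reach $n/2$. Overcoming this seems to require a genuinely new structural result for graphs with triangle-free complement --- for instance, a proof that the bad non-adjacent pairs can be clustered inside a small ``exceptional'' subgraph, so that one merge simultaneously fixes many pairs, or alternatively that the dense-matching bound of this paper can be iteratively tightened after pruning a small set of ``sinks''. Identifying this hidden structure is the dominant obstacle, and is why I expect $\HC_{n/2}$ to require substantial new ideas beyond the matching-based strategy sketched here.
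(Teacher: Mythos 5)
This statement is \nameref{conj:hc_half}, which the paper explicitly presents as an \emph{open conjecture} --- the paper does not prove it and does not claim to. Your write-up is not a proof either, and to your credit you say so: you sketch the natural ``near-perfect matching plus repair'' strategy and then correctly identify why it stalls. So the correct verdict is simply that there is no argument to check --- neither you nor the paper closes the gap, and the paper's actual contribution (\cref{thm:main}) lives strictly in the regime $c>4$, which is far from $c=2$.

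That said, your analysis of the obstruction is accurate and worth affirming. The dense-matching bound $\bigl(\tfrac{1}{c(c-1)^2}+o(1)\bigr)\binom{t}{2}$ degenerates as $c\to 4^{+}$ (the paper even requires $\ell\leq\tfrac{1}{2}c-\tfrac{3}{2}$ and $k\geq 1$, which forces $c\geq 5+2\ell>4$), and at $c$ near $2$ nothing survives. Your observation that each merge-repair trades a constant fraction of branch sets is exactly the accounting problem: if even an $\Omega(1)$ fraction of $\binom{t}{2}$ pairs are bad, local absorption cannot terminate with $\lceil n/2\rceil$ branch sets. One small correction of emphasis: the Berge--Tutte step is easy and gives a matching missing at most $2$ vertices (since $\alpha(G)\leq 2$ forces $o(G-S)\leq 2$ for every $S$), so the bottleneck is entirely in making the matching \emph{connected}, not in making it near-perfect. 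This is precisely the content of \nameref{conj:linear_cm} and of the Thomass\'e / K\"uhn--Plummer--Thomass\'e equivalence the paper cites. The ``hidden structure'' you gesture at --- clustering the bad pairs or iteratively pruning --- is indeed where new ideas would be needed; no such structure is currently known, which is why \nameref{conj:hc_half} (equivalently \nameref{conj:hc_chi}, by the Plummer--Stiebitz--Toft reduction the paper cites) remains open.
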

If $G$ is a graph with $\alpha(G) \leq 2$, then $\chi(G) \geq |V(G)|/ 2$, and thus \nameref{conj:hc_chi} implies \nameref{conj:hc_half}. \citet*{PST03} proved that if $G$ is a minimal counterexample to \nameref{conj:hc_chi}, then $G$ is a counterexample to \nameref{conj:hc_half}. Therefore, \nameref{conj:hc_chi} and \nameref{conj:hc_half} are equivalent.

\citet{NS26a} proved a dense variant of \nameref{conj:hc_half}.
\begin{thm}[\citet{NS26a}]
    \label{thm:dense}
    Every graph $G$ with $\alpha(G) = 2$ contains a minor $H$ with $|V(H)| = \ceil{|V(G)|/ 2}$ and
    \begin{equation*}
        |E(H)| \geq (\gamma - o(1)) \binom{|V(H)|}{2},
    \end{equation*}
    where $\gamma \approx 0.986882$.
\end{thm}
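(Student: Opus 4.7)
The plan is to translate the problem to the complement $\bar G$, which is triangle-free because $\alpha(G) \le 2$. A minor of $G$ with $\ceil{n/2}$ vertices (writing $n = |V(G)|$) corresponds to a collection of $\ceil{n/2}$ pairwise disjoint \emph{branch sets} in $V(G)$, each inducing a connected subgraph of $G$, and two branch sets $B, B'$ are non-adjacent in the densest such minor iff every pair in $B \times B'$ is a non-edge of $G$; equivalently, $\bar G$ contains the complete bipartite graph $K_{|B|,|B'|}$ between them. So the task becomes to choose branch sets that minimise the total count of such complete-bipartite non-adjacencies.

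My first attempt is a near-perfect matching construction: take a maximum matching $M$ in $G$ as the backbone. Since $\alpha(G)\le 2$, any three unmatched vertices must contain a $G$-edge (else they form an independent set of size $3$), so any maximum matching leaves at most two unmatched vertices; one can then combine $M$ with at most one singleton (and at most one size-$3$ branch set absorbing a leftover vertex) to realise a partition of $V(G)$ into exactly $\ceil{n/2}$ branch sets, essentially all of size $2$. With such a partition, two matching edges $ab, cd \in M$ correspond to non-adjacent branch sets iff $\{a,b,c,d\}$ induces a $C_4 = K_{2,2}$ in $\bar G$ with $ab$ and $cd$ as the two non-edges (``diagonals'') of that $C_4$; the objective then reduces to choosing $M$ to minimise the number of such diagonal-$C_4$ configurations.

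To control that count I would combine two extremal ingredients. When $\bar G$ is sparse, one bounds the number of (induced) $C_4$s directly via $\sum_v \binom{d_{\bar G}(v)}{2}$ and convexity, using that in a triangle-free graph of bounded average degree there are few $C_4$s. When $\bar G$ is dense, the Ajtai--Komlós--Szemerédi bound $\chi(\bar G) = O\bigl(\sqrt{n/\log n}\bigr)$ produces a clique cover of $G$ into relatively few cliques, and a matching chosen inside those cliques makes most pairs of matching edges automatically adjacent in the minor. The two regimes can be stitched together by a swapping argument: starting from any near-perfect matching partition, whenever a diagonal-$C_4$ is discovered, swap to one of the other two pairings of that $4$-set; triangle-freeness of $\bar G$ should ensure that the new pairings create few new conflicts, so iterating (or, more cleanly, taking a matching in a suitable auxiliary weighted graph) yields a matching whose density of non-adjacent pairs tends to $0$.

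The hard part will be matching the sharp constant $\gamma \approx 0.986882$. This value is specific enough that I would expect it to emerge from an explicit extremal triangle-free construction (for instance a balanced blow-up of $C_5$ or a Kneser-type graph) acting as the worst case, together with a matching lower bound certified via an LP-type optimisation balancing clique-internal against cross-clique matching edges. The matching-plus-swap framework above should plausibly give density $1-o(1)$ without too much trouble, but pinning down the specific $\gamma$ is the delicate step, and would be the main obstacle in upgrading this sketch into a proof of \cref{thm:dense}.
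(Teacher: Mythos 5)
Your translation to the complement and the identification of diagonal-$C_4$'s in $\bar G$ as the obstruction is a fine starting observation, but the rest of the sketch does not constitute a proof and in one place actually contradicts the theorem. The central gap is the ``swapping argument'': you assert that when a diagonal-$C_4$ is found you can re-pair that $4$-set and that ``triangle-freeness of $\bar G$ should ensure that the new pairings create few new conflicts,'' but no argument is given for why a re-pairing reduces the global conflict count, why new $C_4$'s involving the re-paired edges and the rest of $M$ are controlled, or why this process terminates before cycling. Triangle-freeness alone is not enough to make local swaps monotone, and the AKS bound in the dense regime is invoked only at the level of a slogan with no mechanism for producing the actual matching or bounding its defect.

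More tellingly, you claim the swap framework should ``plausibly give density $1-o(1)$ without too much trouble.'' That would be strictly stronger than \cref{thm:dense}, whose constant is $\gamma \approx 0.986882 < 1$; this constant is not an artifact of a weak proof but reflects a genuine obstruction (the proof in \citep{NS26a} is tight enough that the stated $\gamma$ is the correct constant for that method). If your scheme really produced $1-o(1)$, it would overturn the form of the theorem, so the mismatch is a signal that the sketch is not tracking what actually goes wrong. The route taken by \citet{NS26a} (and adapted for \cref{thm:main} in this paper) is quite different: rather than a deterministic matching repaired by local swaps, one takes a \emph{uniformly random} partition of $V(G)$ into pairs, uses a Chebyshev-type concentration bound (\cref{lem:chebyshev}) to show the random partition contains many $G$-edges with high probability, and bounds the expected number of non-adjacent pairs via a direct count of ``bad quadruples'' driven by the minimum-degree bound $\delta(G)\ge |V(G)|-1-\omega(G)$. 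That probabilistic averaging is what yields a computable constant; your sketch has no analogous quantitative engine, and you yourself concede you cannot recover $\gamma$. As it stands, this is a plausible wish-list of tools rather than a proof of the statement.
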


Even the smallest improvement to \cref{thm:dm} in the $\alpha(G) = 2$ case would be interesting. In particular, \citet{SeymourHC} conjectured that the constant factor of $1/3$ in \cref{thm:dm} could be improved:
\begin{conj}[\conjdefn{$\HC_{\varepsilon}$}]
    \label{conj:hc_epsilon}
    There is a constant $\varepsilon > 0$ such that every graph $G$ with $\alpha(G) \leq 2$ satisfies $\had(G) \geq (\frac{1}{3} + \varepsilon) |V(G)|$.
\end{conj}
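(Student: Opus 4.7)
The plan is to leverage the dense matching theorem stated in the abstract as the main input, and then to extract a clique minor of size at least $(\tfrac{1}{3}+\varepsilon)n$ by augmenting the matching branch sets using unmatched vertices. Let $G$ have $\alpha(G) \le 2$ and $n = |V(G)|$. Fix $c$ slightly above $4$, set $t = \lfloor n/c \rfloor$, and apply the theorem to obtain a matching $M$ of size $t$ whose contracted branch vertices in $G/M$ have at most $\beta\binom{t}{2}$ non-adjacencies, where $\beta = \tfrac{1}{c(c-1)^2} + o(1)$. Write $U = V(G) \setminus V(M)$, so $|U| = n - 2t \approx n/2$; because $t < n/4 < n/3$, the matching alone is not enough and $U$ must be exploited.

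The first structural step uses $\alpha(G)\le 2$ to control how each $z \in U$ interacts with the defects of $G/M$. For any defect $\{uv, xy\}$ (a non-adjacent pair of matching edges), if $z$ has no neighbour in $\{u,v\}$, then $\{z, u, x\}$ and $\{z, u, y\}$ not being independent triples force $z \sim x$ and $z \sim y$, so $z$ is fully joined to $\{x,y\}$. Consequently every $z \in U$ is either fully joined to $\{u,v\}$, fully joined to $\{x,y\}$, or has a neighbour in both pairs, yielding an auxiliary bipartite graph between $U$ and the defects in which every $z$ has positive degree. The second step is to assemble the minor model: attach selected vertices of $U$ to chosen matching branch sets to repair as many defects as possible, then use each remaining $U$-vertex as a singleton branch set. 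The target count is $m = t + (|U| - \text{patches used}) - (\text{unrepaired defects})$, and to beat $\tfrac{n}{3}$ one needs the combined loss to be less than $|U| - (\tfrac{n}{3} - t) = \tfrac{n}{2} - (\tfrac{n}{3} - \tfrac{n}{c})$, which is a linear budget.

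The main obstacle is precisely this \emph{counting gap}. With $|U| = \Theta(n)$ patches available and $\beta\binom{t}{2} = \Theta(n^2)$ potential defects, each patch vertex must simultaneously resolve many defects: a patch $z$ attached to $\{u,v\}$ repairs every defect $(uv, xy)$ with $z$ having a neighbour in $\{x,y\}$, so the problem is genuinely a set-cover on the auxiliary bipartite graph. The crux is showing that the defect structure admits such a cover efficiently, which likely requires an extremal/stability analysis of the matching theorem itself — saying that in near-extremal instances, defects cluster around a small subset of matching edges that can be handled separately (for instance by replacing those edges with alternative matching edges, or by iterating the theorem on the residual). Extracting a strictly positive additive gain $\varepsilon > 0$ from this quantitative analysis is the technical heart of \nameref{conj:hc_epsilon} and is the barrier I expect to be decisive; any proof along these lines must also rule out the alternative where iterating the matching theorem only approaches $n/3$ asymptotically without surpassing it.
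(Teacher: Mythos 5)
You have selected a \emph{conjecture} --- Seymour's $\HC_{\varepsilon}$ --- not a result the paper proves. The paper explicitly presents it as open and only establishes a weaker, dense-matching relative (\cref{thm:main}); the known equivalence of $\HC_{\varepsilon}$ with \nameref{conj:linear_cm} is cited, not settled. So there is no ``paper's proof'' to compare against, and your write-up is, by your own statement, a sketch with an unresolved core step rather than a proof.

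On the mathematics: your structural observation is correct. For a defect $\{uv,xy\}$ all four cross-pairs among $\{u,v\}\times\{x,y\}$ are non-edges, so if $z\in U$ is non-adjacent to both $u$ and $v$, then $\{z,u,x\}$ and $\{z,u,y\}$ would be independent triples unless $z\sim x$ and $z\sim y$; hence each $z$ either dominates one side of the defect or meets both sides. But the patching/set-cover step is exactly the open problem, and nothing in \cref{thm:main} helps you close it. The theorem bounds the number of non-adjacent \emph{pairs of matching edges}; it says nothing about the adjacency pattern between $U$ and $V(M)$, nor does it provide any clustering or stability structure for the defect graph. With $c>4$ the matching itself has only $t<n/4$ edges, so even eliminating every defect would give a $K_{n/4}$-minor, strictly below $n/3$; the entire quantitative burden of beating $1/3$ therefore falls on attaching $\Theta(n)$ vertices of $U$ as new branch sets that are each adjacent to all other branch sets, and this requires global adjacency control that the theorem does not supply. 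Your proposal honestly flags this as ``the barrier I expect to be decisive,'' which is the right assessment: it is precisely where the conjecture remains open, and no argument in the paper, or in your sketch, overcomes it.

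One smaller caution: your sentence ``every $z\in U$ \dots\ has a neighbour in both pairs, yielding an auxiliary bipartite graph \dots\ in which every $z$ has positive degree'' conflates ``being adjacent to a defect'' with ``being able to \emph{repair} a defect.'' To repair $\{uv,xy\}$ by enlarging the branch set at $uv$ to $\{u,v,z\}$, it is not enough that $z$ has a neighbour in $\{x,y\}$; you must also verify that $\{u,v,z\}$ remains adjacent to every other branch set already in your model and that $z$ is adjacent to $u$ or $v$ so the branch set stays connected. Your own case analysis already shows $z$ may be non-adjacent to both $u$ and $v$, in which case attaching $z$ to that side is illegal. These constraints compound the set-cover difficulty rather than alleviate it.
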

\nameref{conj:hc_epsilon} turns out to be equivalent to a conjecture about connected matchings, which we define below.

For disjoint subsets $A, B \subseteq V(G)$, $A$ and $B$ are \defn{adjacent} if some vertex in $A$ is adjacent to some vertex in $B$, otherwise $A$ and $B$ are \defn{non-adjacent}. For $M \subseteq E(G)$, let
$\mathdefn{V(M)} := \{ v \in V(G) : \text{$v$ is an endpoint of an edge $e \in M$} \}$. For an edge $e \in E(G)$, let $\mathdefn{V(e)}:= V(\{e\})$.
A matching $M \subseteq E(G)$ is \defn{connected} if for every two edges $e, f \in M$, $V(e)$ and $V(f)$ are adjacent. By contracting the edges of a connected matching $M$ of $G$ and deleting the unmatched vertices, one obtains a $K_{|M|}$-minor of $G$. For a graph $G$, let $\mathdefn{\cm(G)}$ be the largest size of a connected matching in $G$. \cref{conj:linear_cm} is due to \citet{furedi2005connected} (although they note that other authors may have independently made the same conjecture).
\begin{conj}[\conjdefn{Linear-CM}]
    \label{conj:linear_cm}
    There is a constant $c > 0$ such that for every integer $t \geq 1$, every graph $G$ with $\alpha(G) \leq 2$ and $|V(G)| \geq ct$ has a connected matching of size $t$.
\end{conj}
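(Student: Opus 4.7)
My plan is to use the dense-matching theorem of this paper as the starting point, then clean up the resulting ``almost connected'' matching by extracting an independent set in an auxiliary conflict graph. Concretely, I would apply the dense-matching result (described in the abstract) with a large constant $c' \gg 1$ and target size $t' = t + s$ for a suitable slack $s$, to produce in any $G$ with $\alpha(G) \leq 2$ and $|V(G)| \geq c' t'$ a matching $M$ of size $t'$ in which at most $\varepsilon \binom{t'}{2}$ pairs of edges are non-adjacent, where $\varepsilon = 1/(c'(c'-1)^2)$ can be made arbitrarily small by taking $c'$ large.

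The next step is to form the conflict graph $H$ on vertex set $M$, joining two matching edges whenever they are non-adjacent in $G$; a connected matching of size $t$ in $G$ is then precisely an independent set of size $t$ in $H$. A key structural observation is that $H$ is triangle-free. Indeed, if $e_1 = a_1 b_1$, $e_2 = a_2 b_2$, $e_3 = a_3 b_3 \in M$ were pairwise non-adjacent in $G$, every cross-pair $a_i a_j, a_i b_j, b_i b_j$ (for $i \neq j$) would be an edge of $\bar G$, and in particular $a_1, a_2, a_3$ would span a triangle in $\bar G$, contradicting $\alpha(G) \leq 2$. In fact, a similar argument applied to $k$ pairwise non-adjacent matching edges forces a $K_k$ in $\bar G$, so $H$ contains no $K_k$ whenever the Ramsey-type obstructions are invoked.

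The main obstacle is then extracting an independent set of size $t$ from $H$. Even with triangle-freeness, Shearer's bound yields only $\alpha(H) = O\bigl(\log(\varepsilon t')/\varepsilon\bigr)$, which is constant --- not linear in $t$ --- for any fixed $c'$, so the dense-matching theorem alone does not suffice to settle \nameref{conj:linear_cm} by this route. Closing the gap will require either a substantial quantitative strengthening of the dense-matching theorem that drives the non-adjacent pair count down to $O(t^{2-\delta})$, at which point Shearer delivers $\alpha(H) = \Omega(t)$, or a finer structural analysis of $H$, for instance ruling out large complete bipartite subgraphs $K_{a,b} \subseteq H$ via Ramsey arguments on configurations of three or more matching edges in $\bar G$, then applying K\H{o}v\'ari--S\'os--Tur\'an to re-bound $|E(H)|$. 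I would pursue the structural route, since it exploits the $\alpha(G) \leq 2$ hypothesis at scales beyond pairs of matching edges and matches the flavour of the main theorem of this paper, which already makes heavy use of such Ramsey-type constraints.
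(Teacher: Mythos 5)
The statement in question, \nameref{conj:linear_cm}, is an \emph{open conjecture}: the paper does not prove it, and you correctly recognise in your own write-up that your plan does not close the gap either. The paper's main contribution, \cref{thm:main}, establishes only the ``dense'' variant that you invoke at the start of your proposal, and the introduction makes explicit that improving the constant $1/3$ of \cref{thm:dm}---equivalently, proving \nameref{conj:linear_cm}---remains open.

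Your intermediate observations are nevertheless worth scrutinising. The conflict-graph framing is exactly right: a connected matching of size $t$ in $G$ is an independent set of size $t$ in $H$, and your triangle-freeness argument is correct---three pairwise non-adjacent matching edges would supply, one endpoint each, an independent triple in $G$, contradicting $\alpha(G) \leq 2$, so in fact $\omega(H) \leq \alpha(G) \leq 2$. Your Shearer calculation also correctly diagnoses why the route fails: with $\Theta(\varepsilon t^2)$ non-adjacent pairs, $H$ has average degree $\Theta(\varepsilon t)$, and Shearer yields an independent set of size only $\Theta\bigl(\log(\varepsilon t)/\varepsilon\bigr)$, which is logarithmic in $t$ for fixed $c'$. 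However, your proposed fix is quantitatively off: reducing the non-adjacent pair count to $O(t^{2-\delta})$ gives $H$ average degree $\Theta(t^{1-\delta})$, and Shearer on a triangle-free graph then delivers only $\Theta(t^{\delta}\log t)$, which is $o(t)$ for every $\delta<1$. A linear connected matching via Shearer alone would require $H$ to have bounded average degree, i.e., only $O(t)$ non-adjacent pairs in total---a vastly stronger statement than any constant-factor sharpening of \cref{thm:main}. Of your two suggestions, the structural route (forbidding $K_{a,b}$ subgraphs in $H$ via higher-order configurations in $\overline{G}$, then applying K\H{o}v\'ari--S\'os--Tur\'an and a better independence bound) is the more plausible, but it too is unproven and would itself be a genuine advance towards \nameref{conj:linear_cm}.
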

Thomass\'e first noted that \nameref{conj:linear_cm} and \nameref{conj:hc_epsilon} are equivalent (see \citep{furedi2005connected}). A proof is given by \citet{KPT05}. Therefore, improving the constant $1/3$ in \cref{thm:dm} is as hard as finding a connected matching of linear size. See \citep{furedi2005connected,cambie2021hadwiger,chen2025connected,Fox10,Blasiak07} for partial results on \nameref{conj:linear_cm}. We remark that there is a precise version of \nameref{conj:linear_cm} conjectured by \citet{furedi2005connected}, who asked if for every integer $t \geq 1$, every graph $G$ with $\alpha(G) \leq 2$ and $|V(G)| \geq 4t - 1$ has a connected matching of size at least $t$. The bound $4t - 1$ is best possible, since the disjoint union of two copies of the complete graph $K_{2t - 1}$ does not have a connected matching of size $t$.

The goal of this paper is to prove a dense variant of \nameref{conj:linear_cm}, \cref{thm:main}. The proof of \cref{thm:main} is based on ideas from the proof of \cref{thm:dense} by \citet{NS26a}.
\begin{restatable}{thm}{dense}
    \label{thm:main}
    Let $c > 4$ be a real number, and $t \geq 1$ be an integer. Let $G$ be a graph with $\alpha(G) \leq 2$ and $|V(G)| \geq ct$. Then there is a matching $M$ of $G$ with $|M| = t$ such that the number of non-adjacent pairs of edges in $M$ is at most
    \begin{equation*}
        \left( \frac{1}{c\left(c-1\right)^2} + O_c\left(t^{-1/3} \right) \right) \binom{t}{2}.
    \end{equation*}
\end{restatable}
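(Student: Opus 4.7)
The plan follows the general framework of the Norin--Seymour proof of \cref{thm:dense}, adapted to exploit the slack of at least $|V(G)| - 2t \geq (c-2)t$ vertices available when $c > 4$. First, I would establish that $G$ admits a matching $M_0$ of size $m := \lfloor n/2 \rfloor \geq \lfloor ct/2 \rfloor$, where $n := |V(G)|$. If $G$ is disconnected, then $G$ is a union of at most two cliques and the claim is immediate; if $G$ is connected, this follows from the Gallai--Edmonds structure specialised to $\alpha \leq 2$.

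I would then introduce the \emph{non-adjacency graph} $H$ on vertex set $M_0$, with $\{e, f\} \in E(H)$ iff $e, f$ are non-adjacent in $G$, and establish two structural properties using $\alpha(G) \leq 2$. First, for any non-adjacent pair $\{e, f\} = \{ab, cd\} \in E(H)$, each vertex $v \in V(G) \setminus (V(e) \cup V(f))$ must be adjacent to both endpoints of $e$ or both endpoints of $f$: since $\{a,c\}, \{a,d\}, \{b,c\}, \{b,d\}$ are each independent pairs, $\alpha(G) \leq 2$ applied to each together with $v$ forces $v$ to be adjacent to at least one vertex of each, and a short case analysis yields the dominance. Consequently, every other edge $g \in M_0$ is adjacent in $G$ to $e$ or $f$, so $H$ is triangle-free. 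Second, for $e = ab \in M_0$, the common non-neighbourhood $C_e$ of $a, b$ in $G$ induces a clique (any two non-neighbours of $a$ together with $a$ would otherwise form an independent triple), and $d_H(e)$ equals the number of $M_0$-edges contained in $C_e$.

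The quantitative heart of the proof will be a density bound of the form $|E(H)| \leq \frac{1}{c(c-1)^2}\binom{m}{2} + O_c(m)$, obtained via a careful double-count: each non-adjacent pair consumes much of $V(G)$ through the dominance property, and the budget $n \geq ct$ limits how many such pairs can coexist. With this in hand, I would select $M \subseteq M_0$ with $|M| = t$ uniformly at random; by linearity of expectation, $\mathbb{E}[|E(H[M])|] = |E(H)| \cdot \binom{t}{2}/\binom{m}{2} \leq \left(\frac{1}{c(c-1)^2} + O_c(t^{-1})\right)\binom{t}{2}$. A concentration inequality (a Doob-martingale Azuma bound, or a direct second-moment estimate) then produces some $M$ within $O_c(t^{5/3})$ of this mean, delivering the claimed $O_c(t^{-1/3})$ relative error.

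The hardest step will be the density bound on $|E(H)|$: triangle-freeness alone gives only density $\tfrac12$, far weaker than the required $1/(c(c-1)^2)$. Extracting the precise constant is delicate and will likely require interlocking the clique structure of each $C_e$, the degree sequence of $H$, and the global vertex-count $n \geq ct$, probably via a weighted counting argument in the spirit of the main technical step of \citep{NS26a}. A secondary difficulty is the concentration calculation that achieves $t^{-1/3}$ rather than the standard $t^{-1/2}$, which may demand a Chebyshev-type bound with a tight variance estimate tailored to the structure of $H$.
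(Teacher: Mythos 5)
You correctly identify the hardest step, but there is a genuine gap precisely there, and the plan for it will not close. For a fixed near-perfect matching $M_0$ of size $m = \lfloor n/2 \rfloor$, your two structural observations are right: the ``dominance'' argument does show $H$ is triangle-free, and since the common non-neighbourhood of the endpoints of $e\in M_0$ is a clique of size $\leq \omega(G)\leq t-1$, each $e$ has $d_H(e)\leq (t-1)/2$. But triangle-freeness yields only $|E(H)|\leq m^2/4$ (density $\approx 1/2$), and the degree bound yields $|E(H)|\leq m(t-1)/4 \approx n^2/(8c)$ (density $\approx 1/c$). You need density $\approx \frac{1}{c(c-1)^2}$, a factor $(c-1)^2$ smaller. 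That extra $(c-1)^2$ is not available from these observations, nor from any argument that treats $M_0$ as fixed: one can arrange $\overline{G}$ to be nearly $(t-1)$-regular and triangle-free while $M_0$ hits the non-adjacencies densely, so the degree bound is essentially tight for a worst-case $M_0$. The vague ``weighted double-count'' you propose would have to overcome this example, and no mechanism for doing so is indicated.

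The paper avoids this by never fixing $M_0$. It chooses a \emph{random} partition $X$ of $V(G)$ into pairs, restricts to the event $\mathcal{A}_\ell$ that $|X\cap E(G)| \geq \bigl(\tfrac{c-1}{2}-\ell\bigr)t$ (which has probability $\geq q>0$ by a Chebyshev lemma), and then samples $M$ as a random $t$-subset of $X\cap E(G)$. The averaging over $X$ is the missing ingredient. For any two disjoint non-adjacent edges $e,f$, one has $\Pr(e,f\in X\mid X\in\mathcal{A}_\ell)\leq \frac{1}{q(ct-1)(ct-3)}$, while the conditional probability of then sampling both into $M$ is at most $\bigl(\tfrac{t}{|X\cap E(G)|}\bigr)^2\leq \frac{4}{(c-1-2\ell)^2}$. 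Multiplying these and summing over all disjoint non-adjacent edge pairs — bounded globally by $\frac{ct(t-1)^3}{8}$ via an elementary count of ``bad quadruples'' using $\Delta(\overline{G})\leq t-1$ — gives the claimed bound. The factor $(c-1)^2$ arises precisely because $|X\cap E(G)|\approx\frac{(c-1)t}{2}$, so each edge of $X\cap E(G)$ enters the $t$-subset $M$ with probability $\approx \frac{2}{c-1}$. In your setup, with $m = n/2$, you sample each $M_0$-edge with probability $\approx \frac{2}{c}$, which discards this slack.

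Two smaller points. First, no concentration is needed at the sampling stage: the paper's argument is a pure first-moment bound (linearity of expectation gives a small \emph{expected} number of non-adjacent pairs in $M$, hence some $M$ attains it), and Chebyshev is used only once, to show that $\mathcal{A}_\ell\neq\varnothing$. Your proposed Azuma/second-moment step is superfluous. Second, the $O_c(t^{-1/3})$ error term is not a concentration rate at all; it comes from optimising the free parameter $\ell$, balancing the loss in $q=1-\frac{c}{\ell^2 t}$ (which wants $\ell$ large) against the loss in $(c-1-2\ell)^{-2}$ (which wants $\ell$ small), with the optimum at $\ell = \bigl(\tfrac{c(c-1)}{2t}\bigr)^{1/3}$.
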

\cref{thm:main} implies that for any $\varepsilon > 0$, there exists a $c := c(\varepsilon)$ such that for every integer $t \geq 1$, every graph $G$ with $\alpha(G) \leq 2$ and $|V(G)| \geq ct$ contains a matching $M$ with $|M| = t$ such that the density of adjacent edge pairs is at least $1 - \varepsilon$.
Similar results on dense minors have been studied \citep{NS26a,NguyenDense22,HNSTDense25}. In particular, \citet{NguyenDense22} proved that for every $\varepsilon > 0$, there exists $k > 0$ such that for every integer $t \geq 2$, every graph with chromatic number $kt$ contains a minor with $t$ vertices and edge density at least $1 - \varepsilon$. However, this does not imply \cref{thm:main}, since we require each branch set to have two vertices. \citet{Fox10} used probabilistic methods to prove that every sufficiently large\footnote{\nameref{conj:linear_cm} is equivalent to the assertion that $\cm(G) \geq \Omega(|V(G)|)$.} graph $G$ with $\alpha(G) = 2$ has a connected matching of size at least $\Omega(n^{4/5} \log^{1/5}n)$, where $n = |V(G)|$. However, this does not imply \cref{thm:main}.
\section{Proof}

We use $\mathdefn{\omega(G)}$ to denote the \defn{clique number}, $\mathdefn{\delta(G)}$ to denote the \defn{minimum degree}, and $\mathdefn{\Delta(G)}$ to denote the \defn{maximum degree} of $G$. Let $\mathdefn{\overline{G}}$ denote the \defn{complement} of $G$.
We begin with a lemma, proved by \citet{cambie2021hadwiger}. For completeness, we include the proof here.

\begin{lem}[\citet{cambie2021hadwiger}]
    \label{lem:cambie}
    For every integer $t \geq 1$, if $G$ is a graph with $\alpha(G) = 2$, $|V(G)| \geq 4t - 1$, and $\cm(G) \leq t - 1$, then $\omega(G) \leq \cm(G)$.
\end{lem}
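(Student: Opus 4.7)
The plan is to argue by contradiction. Set $s := \cm(G)$ and assume $\omega(G) \geq s + 1$. Fix a clique $K \subseteq V(G)$ with $|K| = s + 1$, and write $U := V(G) \setminus K$. The hypotheses $|V(G)| \geq 4t - 1$ and $s \leq t - 1$ give $|U| \geq 4t - 1 - (s + 1) \geq 3s + 2$, which is the size bound the rest of the argument exploits. The goal is to extract a connected matching of size $s + 1$ from $G$, contradicting the definition of $\cm(G)$.

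The first move is to consider the bipartite graph $B$ on parts $K$ and $U$ whose edges are exactly the edges of $G$ between $K$ and $U$, and to look for a matching in $B$ saturating $K$. Any such matching is automatically connected in $G$: if $u_1 v_1$ and $u_2 v_2$ are two of its edges, with $u_i \in K$, then $u_1 u_2 \in E(G)$ since $K$ is a clique. Hence if Hall's condition holds on the $K$-side of $B$, we are done.

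Suppose instead that Hall's condition fails at some nonempty $S \subseteq K$, and set $T := U \setminus N_G(S)$. From $|N_G(S) \cap U| \leq |S| - 1$ together with $|U| \geq 3s + 2$ and $|S| \leq s + 1$, one immediately gets $|T| \geq 2s + 2$. Now $\alpha(G) \leq 2$ takes over: for any $u \in S$ and any two distinct $t_1, t_2 \in T$, the triple $\{t_1, t_2, u\}$ contains the non-edges $t_1 u$ and $t_2 u$, so $t_1 t_2$ must be an edge lest we find an independent set of size $3$. Hence $T$ is a clique of size at least $2s + 2$, and pairing up $2s + 2$ of its vertices yields a matching of $s + 1$ edges whose endpoints all lie in the clique $T$, and this is again a connected matching.

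The only step with real content is the second case, and its heart is the short observation that the non-neighbours-in-$U$ of a set $S \subseteq K$ are forced to be pairwise adjacent by $\alpha(G) \leq 2$. Everything else is arithmetic tuned to the $|V(G)| \geq 4t - 1$ bound, which is calibrated precisely so that $|U| \geq 3s + 2$ balances the two cases (a $K$-saturating matching of size $s+1$ in $B$ versus an internal matching in a clique of size $\geq 2s + 2$). I do not foresee any serious obstacle; the main challenge is spotting the Hall reduction in the first place.
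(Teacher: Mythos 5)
Your proof is correct, and it takes a genuinely different route from the paper's. The paper first reduces to $G$ connected, derives $\omega(G) \leq 2t-1$ from $\cm(G) \geq \lfloor\omega(G)/2\rfloor$, uses $\alpha(G)\leq 2$ to get $\delta(G) \geq 2t-1$, then takes a \emph{maximum} clique $A$, a maximum $A$--$B$ matching $M$, and in the unsaturated case uses the maximality of $M$ plus the degree bound to show many vertices of $A$ are unmatched, extending $M$ \emph{inside} $A$ to size $t$. You instead fix a clique $K$ of size exactly $s+1 = \cm(G)+1$, run a Hall dichotomy on the bipartite graph between $K$ and $U = V(G)\setminus K$, and when Hall fails at $S \subseteq K$ you apply the $\alpha\leq 2$ observation to the \emph{other} side, showing the common non-neighbourhood $T \subseteq U$ of $S$ is a clique of size $\geq 2s+2$ and pairing it up internally. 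Both proofs hinge on the same key fact (non-neighbours of a vertex form a clique when $\alpha\leq 2$), but deploy it on opposite sides of the cut; your version avoids the connectivity check and the minimum-degree lemma entirely, and is arguably tighter since it works with a clique of size $s+1$ rather than $\omega(G)$. One very small presentational nit: it is worth stating explicitly that $s = \cm(G) \geq 1$ (so that a $K$-saturating matching really has $s+1 \geq 2$ edges and is nontrivially connected), and that the two endpoints $t_1, t_2 \in T$ and $u \in S$ are automatically distinct because $T \subseteq U$ and $S \subseteq K$ are disjoint -- you are implicitly using both.
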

\begin{proof}
    If $G$ is disconnected, then $G$ is the union of two complete graphs. One component of $G$ has order at least $2t$, implying $\cm(G) \geq t$.
    Hence $G$ is connected.
    Note that $\cm(G) \geq \floor{\omega(G) /2}$, so $\omega(G) \leq 2t-1$. Since $\alpha(G) \leq 2$, the non-neighbours of each vertex of $G$ form a clique, so $\delta(G) \geq |V(G)|-1 - \omega(G)\geq  2t-1$.
    Let $A$ be a clique of order $\omega(G)$ and let $B=V(G) \setminus A.$
    Let $M$ be the largest matching in the bipartite subgraph induced by the edges between $A$ and $B$. Any matching from $A$ to $B$ is connected since $A$ is a clique, so $\left|M\right| \leq \cm(G) \leq t-1$.
    Suppose for contradiction that $\omega(G) > \cm(G) \geq |M|$.
    This implies there is a vertex $x \in A$ unmatched by $M$. However, there is no neighbour of $x$ contained in $B - V(M)$, otherwise this contradicts the maximality of $M$. Therefore, all neighbours of $x$ are contained in $A \cup V(M)$. Since $\deg_G(x) \geq 2t - 1$, $|A \cup V(M)| \geq 2t$. Observe that
    \begin{equation*}
        2|M| + |A \setminus V(M)| = |V(M) \cup (A \setminus V(M))| = |A \cup V(M)| = |A| + |M| \geq 2t.
    \end{equation*}
    Therefore, $|A \setminus V(M)| \geq 2t - 2|M|$, implying there are at least $2t - 2|M|$ vertices in $A$ left unmatched by $M$. We can extend the matching $M$ by pairing the remaining unmatched vertices of $A$, obtaining a connected matching of size $t$, a contradiction.
\end{proof}

Let $S$ be a finite set with $|S|$ even. Let \defn{$\binom{S}{2}$} denote the set of all pairs\footnote{In this paper, a \defn{pair} is an unordered two-element set of distinct elements.} of elements from $S$. A \defn{random partition of $S$ into pairs} is a partition of $S$ into pairs, chosen uniformly at random from the set of all partitions of $S$ into pairs. Let $X$ be a random partition of $S$ into pairs. Note that $|\binom{S}{2}| = \binom{|S|}{2}$, $|X| = |S|/2$, and that for each $e \in \binom{S}{2}$ and every $f \in \binom{S}{2}$ disjoint from $e$,
\begin{equation}
    \label{eq:MBounds}
    \Pr(e \in X) =\frac{1}{|S|-1} \qquad \text{and} \qquad \Pr(e,f \in X) = \frac{1}{(|S|-1)(|S|-3)}.
\end{equation}

For $F \subseteq \binom{S}{2}$, \cref{lem:chebyshev} bounds the concentration of $|F \cap X|$ around $|F|/(|S| - 1)$. \cref{lem:chebyshev} was first proved by \citet{NS26a}, but for completeness we provide the proof here.
\begin{lem}[\citet{NS26a}]
    \label{lem:chebyshev}
    Let $S$ be a finite set with $|S| \geq 4$ even, and let $X$ be a random partition of $S$ into pairs. Then for every $F \subseteq \binom{S}{2}$ and every real number $\lambda>0$,
    \begin{equation*}
        \Pr\left(\left||F \cap X| - \frac{|F|}{|S|-1}\right| \geq \lambda \right) \leq \frac{|S|}{\lambda^2}.
    \end{equation*}
\end{lem}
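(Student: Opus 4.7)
The plan is to apply Chebyshev's inequality to the random variable $Z := |F \cap X|$; the desired tail bound is then equivalent to showing $\Var(Z) \leq |S|$, together with the correct mean $\E[Z] = |F|/(|S|-1)$. The latter is immediate from \cref{eq:MBounds} and linearity of expectation.

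For the second moment, I would expand $\E[Z^2] = \sum_{(e,f) \in F \times F} \Pr(e \in X, f \in X)$ and split the sum according to whether $e = f$, whether $e \neq f$ are disjoint, or whether $e \neq f$ share a vertex. The last case contributes zero because $X$ is a matching, while the first two are handled by \cref{eq:MBounds}, giving
\begin{equation*}
    \E[Z^2] = \frac{|F|}{|S|-1} + \frac{d}{(|S|-1)(|S|-3)},
\end{equation*}
where $d$ is the number of ordered pairs of distinct disjoint edges of $F$.

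The key step is to bound $d$ carefully enough. For each $v \in S$, let $A_v$ denote the number of edges of $F$ incident to $v$, so that $\sum_{v \in S} A_v = 2|F|$. Since any two distinct edges of $F$ share at most one vertex, the number of ordered pairs of distinct edges of $F$ sharing a vertex is exactly $\sum_v A_v(A_v - 1)$, and Cauchy-Schwarz gives $\sum_v A_v^2 \geq 4|F|^2/|S|$, hence $d \leq |F|(|F|-1) - 4|F|^2/|S| + 2|F|$. Substituting into $\Var(Z) = \E[Z^2] - \E[Z]^2$, the coefficients of $|F|^2$ collapse to a \emph{negative} multiple of $|S|-2$ via the identity $(|S|-4)(|S|-1) - |S|(|S|-3) = -2(|S|-2)$, and one arrives at
\begin{equation*}
    \Var(Z) \leq \frac{|F|(|S|-2)}{(|S|-1)(|S|-3)} \left[ 1 - \frac{2|F|}{|S|(|S|-1)} \right].
\end{equation*}
Since $|F| \leq \binom{|S|}{2}$, the bracketed factor lies in $[0,1]$, and the right-hand side is concave in $|F|$ with maximum $|S|(|S|-2)/[8(|S|-3)] \leq |S|$ for every even $|S| \geq 4$. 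Chebyshev's inequality then yields the claim.

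The main obstacle is precisely this variance bound: the naive estimate $d \leq |F|^2$ produces a variance of order $|S|$ only up to a constant factor strictly larger than $1$, the gap becoming visible when $|F|$ is close to $\binom{|S|}{2}$. The Cauchy-Schwarz step above extracts exactly enough negative mass from the $|F|^2$ term to restore the bound; alternatively one could exploit the symmetry $\Var(|F \cap X|) = \Var(|(\binom{S}{2} \setminus F) \cap X|)$ to reduce to the regime $|F| \leq \binom{|S|}{2}/2$ and then apply a cruder estimate.
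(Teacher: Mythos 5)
Your argument is correct, and it genuinely differs from the paper's proof in how it controls the variance. The paper first symmetrises: since the event $\bigl||F\cap X| - |F|/(|S|-1)\bigr| < \lambda$ is invariant under replacing $F$ by $F^C := \binom{S}{2}\setminus F$, it may assume $|F| \leq |S|(|S|-1)/4$, and then uses the cheap estimate that the number of disjoint ordered pairs of edges of $F$ is at most $|F|^2$ (dropping the negative covariance contributions of intersecting pairs). That crude count is only adequate because the symmetrisation has already forced $|F|$ small. You instead count the disjoint pairs exactly as $|F|(|F|-1)-\sum_v A_v(A_v-1)$ and apply Cauchy--Schwarz to $\sum_v A_v^2$, which yields the clean closed form
\begin{equation*}
    \Var(Z) \;\leq\; \frac{|F|(|S|-2)}{(|S|-1)(|S|-3)}\left(1 - \frac{2|F|}{|S|(|S|-1)}\right),
\end{equation*}
a bound that is simultaneously correct for all $|F|\leq\binom{|S|}{2}$ with no symmetrisation step, and sharper (it vanishes at both endpoints $|F|\in\{0,\binom{|S|}{2}\}$, as it should, whereas the paper's intermediate bound does not). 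Both routes finish with the same Chebyshev application. The one quibble is a remark in your closing paragraph: the loss from the naive bound $d\leq|F|^2$ is not really a constant factor strictly above $1$ — asymptotically it is an additive $O(1)$ excess over $|S|$ — but the substantive point stands, namely that the naive bound without symmetrisation exceeds $|S|$ for every $|S|\geq 4$, so either your Cauchy--Schwarz refinement or the paper's WLOG $|F|\leq|S|(|S|-1)/4$ is genuinely needed.
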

\begin{proof}
    For $\lambda > 0$, let $E$ be the event that
    \begin{equation*}
        \frac{|F|}{|S| - 1} - \lambda < |F \cap X| < \frac{|F|}{|S| - 1} + \lambda.
    \end{equation*}
    Let $F^C := \binom{S}{2} \setminus F$, so that $|F^C \cap X|$ counts the number of elements of $X$ not in $F$. Then $E$ occurs if and only if (noting that $|X| = |S|/2$):
    \begin{equation*}
        \frac{|S|}{2} -  \frac{|F|}{|S| - 1} - \lambda \leq |F^C \cap X| \leq \frac{|S|}{2} -  \frac{|F|}{|S| - 1} + \lambda.
    \end{equation*}
    Expanding,
    \begin{equation*}
        \frac{|S|(|S| - 1) - 2|F|}{2(|S| - 1)} - \lambda  \leq  |F^C \cap X| \leq \frac{|S|(|S| - 1) - 2|F|}{2(|S| - 1)} + \lambda.
    \end{equation*}
    However, $|S|(|S| - 1) - 2|F| = 2|\binom{S}{2} \setminus F| = 2 |F^C|$. Thus $E$ occurs if and only if
    \begin{equation*}
        \frac{|F^C|}{|S| - 1} - \lambda \leq |F^C \cap X| \leq \frac{|F^C|}{|S| - 1} + \lambda.
    \end{equation*}
    Therefore, suppose event $E$ occurs. By replacing $F$ with $F^C$ if needed, we may without loss of generality assume that
    \begin{equation*}
        |F| \leq \frac{|S|(|S| - 1)}{4}.
    \end{equation*}
    For each pair $e \in F$, let $Z_e$ be the indicator random variable with $Z_e = 1$ if $e \in X$ and $Z_e = 0$ otherwise. Let $Z = \sum_{e \in F} Z_e = |F \cap X|$.
    By \cref{eq:MBounds}, $\E(Z) = |F|/(|S| - 1)$ and
    \begin{align*}
        \Var(Z) & = \sum_{(e,f) \in F \times F} \Cov(Z_e, Z_f)                                                                                                                                                                \\
        \intertext{Note that if $e \cap f \neq \varnothing$, then $e$ and $f$ cannot be both in $X$. Hence, $\E(Z_e \cdot Z_f) = 0$. By definition of covariance, $\Cov(Z_e, Z_f) = \E(Z_e \cdot Z_f) - \E(Z_e) \cdot \E(Z_f) \leq 0$. Therefore,}
        \Var(Z) & \leq \sum_{e \in F} \Cov(Z_e, Z_e) + \sum_{e \in F} \left(\sum_{f \in F,\, f \cap \, e = \varnothing} \Cov(Z_e, Z_f)  \right)                                                                               \\
                & = |F| \cdot \Var(Z_e) + \sum_{e \in F} \left( \sum_{f \in F,\, f \cap \, e = \varnothing} \E(Z_e \cdot Z_f) - \E(Z_e) \cdot \E(Z_f) \right)                                                                 \\
                & \leq |F| \cdot \Pr(Z_e = 1) + \sum_{e \in F} \left(\sum_{f \in F,\, f \cap \, e = \varnothing} \left( \frac{1}{\left(|S| - 1\right)\left(|S| - 3\right)} - \left( \frac{1}{|S| - 1}\right)^2\right) \right) \\
                & \leq \frac{|F|}{|S| - 1} + |F|^2 \left( \frac{1}{\left(|S| - 1\right)\left(|S| - 3\right)} - \left( \frac{1}{|S| - 1}\right)^2\right)                                                                       \\
                & \leq \frac{|S|}{4} + \frac{|S|^2}{8(|S| - 3)} \leq |S|.
    \end{align*}
    By Chebyshev's inequality (see for instance \citep{AS00}, Theorem 4.1.1),
    \begin{equation*}
        \Pr\left(\left||F \cap X| - \frac{|F|}{|S| - 1} \right| \geq \lambda \right) \leq \left(\frac{\sqrt{\Var(Z)}}{\lambda}\right)^2 \leq \frac{|S|}{\lambda^2},
    \end{equation*}
    as desired.
\end{proof}

Let $G$ be a graph. An ordered sequence of four distinct vertices $(u,v,w,z)$ is a \defn{bad quadruple} if $uv, wz \in E(G)$, and $uw, uz, vw, vz \notin E(G)$. Observe that every pair of non-adjacent edges that do not share an endpoint corresponds to eight bad quadruples.
\begin{lem}
    \label{lem:bad_quads}
    Let $k \geq 1$ be an integer, and let $G$ be a graph such that $|E(\overline{G})| = b$, and suppose $\delta(G) \geq |V(G)| - k$. Then the number of bad quadruples is at most $2b(k-1)^2$.
\end{lem}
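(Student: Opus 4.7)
The plan is to prove this by a straightforward double-counting over ordered pairs of non-adjacent vertices of $G$. The first step is to restate the degree hypothesis: since $\delta(G) \geq |V(G)| - k$, every vertex $v$ has at most $k - 1$ non-neighbours in $V(G) \setminus \{v\}$. This observation will be the only place where the hypothesis on $\delta(G)$ is used.

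For the main count, I would fix an ordered pair $(u, w)$ of distinct vertices with $uw \notin E(G)$; there are exactly $2b$ such ordered pairs. Given any bad quadruple $(u, v, w, z)$, the ordered pair formed by its first and third coordinates is of this form, so it suffices to bound the number of $(v, z)$ that extend a given $(u, w)$ to a bad quadruple. From the definition, $vw \notin E(G)$, so $v$ must lie among the at most $k - 1$ non-neighbours of $w$ in $V(G) \setminus \{w\}$; independently, $uz \notin E(G)$, so $z$ must lie among the at most $k - 1$ non-neighbours of $u$ in $V(G) \setminus \{u\}$. Multiplying these bounds and summing over the $2b$ choices of $(u, w)$ yields the claimed total of at most $2b(k-1)^2$ bad quadruples.

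There is no substantial obstacle: the argument uses only two of the four non-edge constraints in the definition of a bad quadruple, and neither of the two edge constraints $uv, wz \in E(G)$, so the bound falls out immediately once the right indexing by ordered non-edges is chosen. Exploiting the remaining constraint $vz \notin E(G)$ and the edge constraints would strengthen the estimate (for example, the choice of $v$ is really restricted to $N_G(u) \cap \overline{N}_G(w) \setminus \{u\}$), but this extra saving is not needed for the stated inequality.
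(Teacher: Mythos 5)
Your proof is correct and follows exactly the same double-counting argument as the paper: index bad quadruples by the ordered non-edge $(u,w)$, then bound the choices of $v$ and $z$ by the at-most-$(k-1)$ non-neighbours of $w$ and $u$ respectively.
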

\begin{proof}
    Let $(u,v,w,z)$ be a bad quadruple.
    There are $2|E(\overline{G})| = 2b$ ways to choose $(u,w)$ with $uw \notin E(G)$. For a fixed $(u,w)$, $v \notin \{ u, w \}$ is chosen from at most $k-1$ non-neighbours of $w$, and $z \notin \{ u, v, w \}$ is chosen from at most $k-1$ non-neighbours of $u$. Thus, the number of bad quadruples is at most $2b(k-1)^2$.
\end{proof}

\begin{lem}
    \label{lem:tech_bound}
    For an integer $t \geq 1$,
    let $G$ be a graph with $\alpha(G) \leq 2$ and $|V(G)|$ even. Suppose $c \geq 4$ is a real number such that $|V(G)| = ct$ and suppose $\cm(G) \leq t - 1$. Let $\ell$ be a real number such that $\ell^2 > c / t$ and $\ell \leq \frac{1}{2}c - \frac{3}{2}$. Let $q:= 1 - \frac{c}{\ell^2 t}$, let $k:= \frac{1}{2}(c-1) - \ell$, so that $q > 0$ and $k \geq 1$, and let $p:=1/k$. Then there is a matching $M$ of $G$ with $|M| = t$ such that the number of non-adjacent pairs of edges is at most
    \begin{equation*}
        \frac{p^2ct(t-1)^3}{8q(ct-1)(ct-3)}.
    \end{equation*}
\end{lem}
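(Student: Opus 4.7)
The plan is to take a random partition of $V(G)$ into pairs, keep only those pairs that happen to be edges of $G$, and then subsample uniformly to exactly $t$ of them, exploiting the surplus of retained pairs to shrink the count of non-adjacent edge-pairs by a factor of $\sim p^2$.

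First, since $|V(G)| = ct \geq 4t - 1$ and $\cm(G) \leq t - 1$, \cref{lem:cambie} yields $\omega(G) \leq t - 1$. Because $\alpha(G) \leq 2$, the non-neighbours of any vertex form a clique of size at most $t-1$, so $\delta(G) \geq (c-1)t$ and summing non-degrees gives $|E(\overline{G})| \leq \frac{ct(t-1)}{2}$. Applying \cref{lem:bad_quads} with parameter $t$ (valid since $\delta(G) \geq |V(G)| - t$) and dividing by $8$ bounds the number of unordered pairs of disjoint non-adjacent edges in $G$ by $\frac{ct(t-1)^3}{8}$.

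Now let $X$ be a random partition of $V(G)$ into pairs, set $E' := E(G) \cap X$, and let $Y$ be the family of pairs of edges in $E'$ whose vertex sets are non-adjacent in $G$. From $|E(G)| \geq \binom{ct}{2} - \frac{ct(t-1)}{2}$ one computes $|E(G)|/(ct-1) \geq \frac{(c-1)t}{2}$, so \cref{lem:chebyshev} with $F = E(G)$ and $\lambda = \ell t$ gives
\[
\Pr(|E'| \geq kt) \geq 1 - \frac{ct}{(\ell t)^2} = q.
\]
On the other hand, \cref{eq:MBounds} combined with the bound from the previous paragraph yields $\E[|Y|] \leq \frac{ct(t-1)^3}{8(ct-1)(ct-3)}$. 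Conditioning on the event $\{|E'| \geq kt\}$, which has probability at least $q$, inflates $\E[|Y|]$ by at most a factor of $1/q$, so by averaging there exists a partition $X$ simultaneously satisfying $|E'| \geq kt$ and $|Y| \leq \frac{ct(t-1)^3}{8q(ct-1)(ct-3)}$.

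Fix such an $X$ and take $M$ to be a uniformly random size-$t$ subset of $E'$; since $E'$ is itself a matching of $G$, so is $M$. The expected number of non-adjacent pairs in $M$ equals
\[
|Y| \cdot \frac{\binom{t}{2}}{\binom{|E'|}{2}} \leq |Y| \cdot \frac{t(t-1)}{kt(kt-1)} \leq \frac{|Y|}{k^2} = p^2 |Y|,
\]
where the last inequality uses $kt - 1 \geq k(t-1)$, valid because $k \geq 1$. Some realisation of $M$ therefore attains the claimed bound. The delicate point is the joint control of $|E'|$ and $|Y|$: the parameter $\ell$ must be large enough ($\ell^2 t > c$) to push $q$ close to $1$ in the Chebyshev step, yet small enough ($\ell \leq (c-3)/2$) to keep $k \geq 1$ so that the subsampling factor $p^2 = 1/k^2$ is a genuine contraction. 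The stated hypotheses on $\ell$ are precisely what allows both halves of the argument to fire simultaneously.
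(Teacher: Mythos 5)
Your proof is correct and is essentially the same as the paper's: the same deduction $\omega(G) \leq t-1$ via \cref{lem:cambie}, the same Chebyshev-based concentration for $|X \cap E(G)|$, the same count of bad quadruples, and the same subsampling of $t$ edges from the retained pairs. The only difference is bookkeeping---you first extract a single partition $X$ that is simultaneously good for both $|E'|$ and $|Y|$ via an averaging argument and then randomize only over the subsample, whereas the paper keeps both stages random and computes the conditional expectation given $X \in \mathcal{A}_{\ell}$; these are equivalent packagings of the same argument.
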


\begin{proof}
    Let \defn{$\mathcal{A}_{\ell}$} be the set of all partitions $X$ of $V(G)$ into pairs such that
    \begin{equation*}
        |X \cap E(G)| \geq \frac{1}{2}(c-1)t - \ell t.
    \end{equation*}
    Since $|V(G)| \geq ct$ and $\cm(G) \leq t - 1$, by \cref{lem:cambie}, $\omega(G) \leq t - 1$. Since $\alpha(G) \leq 2$, the non-neighbours of each vertex form a clique, which implies that $\Delta(\overline{G}) \leq t - 1$ and $\delta(G) \geq |V(G)| - 1 - (t - 1) \geq (c-1)t$.
    \begin{claim}
        \label{claim:al_non_empty}
        $\mathcal{A}_{\ell} \neq \varnothing$.
    \end{claim}
    \begin{proof}
        Let $X$ be a random partition of $V(G)$ into pairs.
        By taking $F = E(G)$ and $S = V(G)$ in \cref{lem:chebyshev},
        \begin{equation*}
            \Pr\left( \left| \, | E(G) \cap X | - \frac{|E(G)|}{|V(G)| - 1} \, \right| \geq \ell t\right) \leq \frac{|V(G)|}{\ell^2 t^2}.
        \end{equation*}
        Therefore,
        \begin{equation}
            \label{eq:bound_first_choice}
            \Pr\left(\frac{|E(G)|}{|V(G)| - 1} - \ell t  \leq |E(G) \cap X| \leq \frac{|E(G)|}{|V(G)| - 1} + \ell t\right) \geq 1 - \frac{|V(G)|}{\ell^2 t^2} = q > 0.
        \end{equation}
        Note that when $|X \cap E(G)| \geq \frac{|E(G)|}{|V(G)| - 1} - \ell t$,
        \begin{equation*}
            |X \cap E(G)| \geq \frac{|E(G)|}{|V(G)| - 1} - \ell t \geq \frac{|V(G)|(c-1)t}{2(|V(G)| - 1)} - \ell t \geq  \frac{1}{2}(c-1)t - \ell t,
        \end{equation*}
        and therefore $X \in \mathcal{A}_{\ell}$. By \cref{eq:bound_first_choice},
        \begin{equation}
            \label{eq:a_l}
            \Pr(X \in \mathcal{A}_{\ell}) \geq q > 0.
        \end{equation}
        Thus, $\mathcal{A}_{\ell} \neq \varnothing$.
    \end{proof}
    We now describe our process of choosing a random matching $M$ of size $t$ in $G$.
    Choose $X \in \mathcal{A}_{\ell}$ (which by \cref{claim:al_non_empty} is non-empty) uniformly at random. Then, choose a matching $M$ with $t$ edges from $X \cap E(G)$; such a choice is possible because $|X \cap E(G)| \geq \frac{1}{2}(c-1)t - \ell t  = kt \geq t$.
    Observe that each edge of $X \cap E(G)$ is chosen to be in $M$ with probability at most $p = 1/k$, and therefore for any edge $e \in E(G)$:\footnote{Note that we have described two random processes. The first defines $X$ to be a random partition of $V(G)$ into pairs. The second chooses $X \in \mathcal{A}_{\ell}$ uniformly at random. The probabilities in \cref{claim:conditional} refer to the second process. From now on, we use $\Pr(\,\cdots \mid X \in \mathcal{A}_{\ell}$) to emphasise that these probabilities refer to the second process. For probabilities in the first process, we do not condition on $X \in \mathcal{A}_{\ell}$.}
    \begin{equation*}
        \Pr(e \in M \mid e \in X \text{ and } X \in \mathcal{A}_{\ell}) = \frac{t}{|X \cap E(G)|} \leq \frac{t}{kt} = \frac{1}{k} = p.
    \end{equation*}
    For any two distinct edges $e, f \in E(G)$ that do not share an endpoint:
    \begin{align}
        \label{eq:ef_in_M}
        \Pr(e,f \in M \mid e \in X \text{ and } X \in \mathcal{A}_{\ell}) & = \frac{t}{|X \cap E(G)|} \cdot \frac{t-1}{|X \cap E(G)|-1} \nonumber               \\
                                                                          & \leq \frac{t}{kt} \cdot \frac{t-1}{kt - 1} \leq \left( \frac{t}{kt}\right)^2 = p^2.
    \end{align}
    \begin{claim}
        \label{claim:conditional}
        For any two distinct edges $e, f \in E(G)$ that do not share an endpoint,
        \begin{equation*}
            \Pr(e \in X \mid X \in \mathcal{A}_{\ell}) \leq \frac{1}{q(ct - 1)}, \quad \text{and} \quad \Pr(e, f \in X \mid X \in \mathcal{A}_{\ell}) \leq \frac{1}{q(ct - 1)(ct- 3)}.
        \end{equation*}
    \end{claim}
    \begin{proof}
        Consider a random partition $X$ of $V(G)$ into pairs. By \cref{eq:a_l}:
        \begin{align*}
            \Pr(e \in X) & \geq \Pr(e \in X \text{ and } X \in \mathcal{A}_{\ell})                          \\
                         & = \Pr(e \in X \mid X \in \mathcal{A}_{\ell}) \cdot \Pr(X \in \mathcal{A}_{\ell}) \\
                         & \geq \Pr(e \in X \mid X \in \mathcal{A}_{\ell}) \cdot q.
        \end{align*}
        By \cref{eq:MBounds},
        \begin{equation*}
            \Pr(e \in X) = \frac{1}{|V(G)| - 1} \leq \frac{1}{ct - 1}.
        \end{equation*}
        Rearranging gives the desired result. The second claim follows from a similar argument.
    \end{proof}
    Since $\Delta(\overline{G}) \leq t - 1$, $|E(\overline{G})| \leq \frac{1}{2}ct(t-1)$. By \cref{lem:bad_quads}, the number of bad quadruples of $G$ is at most $ct(t-1)^3$. Therefore, there are at most $ct(t-1)^3 /8$ non-adjacent pairs of edges in $G$ that do not share an endpoint. Define:
    \begin{equation*}
        \mathdefn{\mathcal{U}}:= \left\{\{e,f\} \in \binom{E(G)}{2}: \text{$e$ and $f$ do not share an endpoint and are non-adjacent}\right\}.
    \end{equation*}
    Then
    \begin{equation}
        \label{eq:U_bound}
        |\mathcal{U}| \leq \frac{ct(t-1)^3}{8}.
    \end{equation}

    For any two distinct edges $e, f \in E(G)$ that do not share an endpoint, if $e, f \in M$, then $e, f \in X$, and thus:
    \begin{align*}
        \Pr(e, f \in M \mid X \in \mathcal{A}_{\ell}) & = \Pr(e, f \in M \text{ and } e, f \in X \mid X \in \mathcal{A}_{\ell}).
    \end{align*}
    By \cref{eq:ef_in_M,claim:conditional},
    \begin{align}
        \label{eq:ef_in_M2}
        \Pr(e, f \in M \mid X \in \mathcal{A}_{\ell}) & = \frac{\Pr(e, f \in M \text{ and } e, f \in X \mid X \in \mathcal{A}_{\ell})}{\Pr(e, f \in X \mid X \in \mathcal{A}_{\ell})} \cdot \Pr(e, f \in X \mid X \in \mathcal{A}_{\ell}) \nonumber \\
                                                      & = \Pr(e, f \in M \mid e, f \in X \text{ and } X \in \mathcal{A}_{\ell}) \cdot \Pr(e, f \in X \mid X \in \mathcal{A}_{\ell}) \nonumber                                                       \\
                                                      & \leq p^2 \cdot \frac{1}{q(ct - 1)(ct- 3)}.
    \end{align}

    From \cref{eq:ef_in_M2,eq:U_bound}, it follows that
    \begin{align*}
        \E(\text{number of non-adjacent pairs of edges in $M$} \mid X \in \mathcal{A}_{\ell}) & = \sum_{\{e, f\} \, \in \, \mathcal{U}} \Pr(e, f \in M \mid X \in \mathcal{A}_{\ell}) \\
                                                                                              & \leq \frac{p^2ct(t-1)^3}{8q(ct-1)(ct-3)}.
    \end{align*}
    This implies that there is a matching $M$ of $G$ with $|M| = t$ such that the number of non-adjacent pairs of edges is at most $\frac{p^2ct(t-1)^3}{8q(ct-1)(ct-3)}$, as desired.
\end{proof}
We minimise the bound in \cref{lem:tech_bound} for $t$ large relative to $c$. We use the following observation repeatedly.
\begin{obs}
    Let $f$ and $g$ be monic polynomials with $\deg(f) = n$ and $\deg(g) = m$. Then
    \begin{equation*}
        \frac{f(x)}{g(x)} = x^{n-m} + O(x^{n-m-1}).
    \end{equation*}
\end{obs}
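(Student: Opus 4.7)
The plan is to perform a single step of polynomial long division. I would define $h(x) := f(x) - x^{n-m} g(x)$ and then simply rewrite
\[
\frac{f(x)}{g(x)} = x^{n-m} + \frac{h(x)}{g(x)}.
\]
Because $f$ and $g$ are both monic, the leading $x^n$ term of $f(x)$ cancels exactly with the leading $x^n$ term of $x^{n-m} g(x)$, and hence $\deg(h) \leq n - 1$. This identity reduces the whole observation to a bound on the error term $h(x)/g(x)$.

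The remaining step is to show $h(x)/g(x) = O(x^{n-m-1})$ as $x \to \infty$. Since $\deg(h) \leq n - 1$, I would bound $|h(x)| \leq C_1 |x|^{n-1}$ for all sufficiently large $|x|$, where $C_1$ depends only on the coefficients of $h$. Since $g$ is monic of degree $m$, the leading term of $g$ dominates at infinity, so $|g(x)| \geq \tfrac{1}{2} |x|^m$ for all sufficiently large $|x|$. Multiplying the two estimates yields $|h(x)/g(x)| \leq 2 C_1 |x|^{n-m-1}$, which is exactly the claim. The argument makes no sign assumption on $n - m$, so the cases $n > m$, $n = m$, and $n < m$ are all handled uniformly (in the latter two cases $O(x^{n-m-1})$ just becomes a statement about vanishing at infinity).

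I do not foresee any genuine obstacle: the statement is essentially the textbook fact that a rational function $f(x)/g(x)$ is asymptotic to $(\text{lc}\,f / \text{lc}\,g)\, x^{\deg f - \deg g}$ at infinity, specialised to both leading coefficients equal to $1$. An equivalent route would be to factor $x^n$ out of $f$ and $x^m$ out of $g$, write
\[
\frac{f(x)}{g(x)} = x^{n-m} \cdot \frac{1 + a_{n-1}/x + \cdots + a_0/x^n}{1 + b_{m-1}/x + \cdots + b_0/x^m},
\]
and expand the denominator via a geometric series in the $O(1/x)$ correction; this gives the same conclusion but is slightly longer to write out than the polynomial-division approach above.
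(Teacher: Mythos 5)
Your argument is correct: the single division step $f(x) = x^{n-m}g(x) + h(x)$ with the leading terms cancelling (both polynomials monic), combined with the standard bounds $|h(x)| \leq C_1 |x|^{n-1}$ and $|g(x)| \geq \tfrac{1}{2}|x|^{m}$ for large $|x|$, gives exactly the claimed asymptotic, and your remark that no sign assumption on $n-m$ is needed is accurate (for $n<m$ the quantity $h$ is a Laurent polynomial, but the upper bound $|h(x)| \leq C_1|x|^{n-1}$ for large $|x|$ still holds). The paper states this Observation without proof, treating it as a routine textbook fact, so your write-up simply supplies the standard justification and there is no substantive difference to report.
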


We now finish the proof of the main theorem, \cref{thm:main}.

\dense*
\begin{proof}
    If $\cm(G) \geq t$, then we are done. Otherwise, if $|V(G)|$ is odd, then let $G'$ be the graph obtained from $G$ by deleting an arbitrary vertex from $G$. If $|V(G)|$ is even, then let $G':= G$. Therefore, $|V(G')|$ is even.
    Let $c' > 0$ be a constant such that $c't = |V(G')|$. Then $c't \geq |V(G')| \geq |V(G)| - 1 \geq ct - 1$, so $c' \geq c - \frac{1}{t} $.  When $t$ is large relative to $c$, since $c > 4$, we have $c' \geq 4$. Applying \cref{lem:tech_bound} to $G'$, for every real number $\ell$ such that $\ell^2 > c'/t$ and $\ell \leq \frac{1}{2}c' - \frac{3}{2}$, there is a matching $M$ of $G'$ with $|M| = t$ such that at most
    \begin{equation*}
        \frac{p^2c't(t-1)^3}{8q(c't-1)(c't-3)}
    \end{equation*}
    pairs of edges are non-adjacent, where $k:= \frac{1}{2}(c'-1) - \ell$, $q:= 1 - \frac{c'}{\ell^2 t}$ and $p:=\frac{1}{k} $. Plugging these in yields
    \begin{align*}
        \frac{p^2c't\left(t-1\right)^3}{8q\left(c't-1\right)\left(c't-3\right)} & = \frac{\left(\frac{1}{\frac{1}{2}\left(c'-1\right) - \ell} \right)^2c't\left(t-1\right)^3}{8\left(1 - \frac{c'}{\ell^2 t}\right)\left(c't-1\right)\left(c't-3\right)} \\
                                                                                & = \frac{c't\left(t-1\right)^3}{2\left(c'-1-2\ell\right)^2 \left(1 - \frac{c'}{\ell^2 t}\right)\left(c't - 1\right)\left(c't - 3\right) }:= \mathdefn{f(c', t, \ell)}.
    \end{align*}
    This expression is minimised when
    \begin{equation}
        \label{eq:ell_value}
        \ell := \left(\frac{c'\left(c'-1\right)}{2t} \right)^{1/3}.
    \end{equation}
    When $t$ is large relative to $c'$, observe that the value of $\ell$ in \cref{eq:ell_value} satisfies $\ell^2 > c'/t$ and $\ell \leq \frac{1}{2}c' - \frac{3}{2}$. We handle the terms of $f(c', t, \ell)$ individually. Observe that:
    \begin{equation}
        \label{eq:first_term}
        \frac{c't(t-1)^3}{(c't- 1)(c't - 3)} \leq \frac{t(t-1)}{c'}.
    \end{equation}
    Using \cref{eq:ell_value}:
    \begin{align}
        \label{eq:asym_bound}
        c' - 1 - 2 \ell & = c' - 1 - 2\left(\frac{c'(c'-1)}{2t}\right)^{1/3}                         \nonumber \\
                        & = (c'-1) - 2^{2/3} \left( \frac{c'(c'-1)}{t} \right)^{1/3}                 \nonumber \\
                        & =(c'-1)\left( 1 - \frac{2^{2/3} {c'}^{1/3}}{(c'-1)^{2/3} t^{1/3}} \right) \nonumber  \\
                        & = (c'-1) (1 - O_c(t^{-1/3})).
    \end{align}
    By \cref{eq:asym_bound},
    \begin{align}
        \label{eq:second_term}
        (c' - 1 - 2\ell)^2 & = (c' - 1)^2 \left( 1 -  O_c(t^{-1/3}) \right)^2  \nonumber            \\
                           & = (c' - 1)^2 \left( 1 - 2 \cdot O_c(t^{-1/3}) + O_c(t^{-2/3}) \right).
    \end{align}
    For the $(1 - \frac{c'}{\ell^2 t})$ term,
    \begin{equation*}
        \ell^2 t = \left( \frac{c'(c'-1)}{2t} \right)^{2/3} t = \frac{t^{1/3}(c'(c'-1))^{2/3}}{2^{2/3}}.
    \end{equation*}
    Thus,
    \begin{equation}
        \label{eq:third_term}
        1 - \frac{c'}{\ell^2 t} = 1 - c' \cdot \frac{2^{2/3}}{t^{1/3} (c'(c'-1))^{2/3}}
        = 1 - \frac{2^{2/3} {c'}^{1/3}}{(c'-1)^{2/3} t^{1/3}} = 1 -  O_c(t^{-1/3}).
    \end{equation}
    By \cref{eq:first_term,eq:second_term,eq:third_term}, since $t$ is large relative to $c'$,
    \begin{align*}
        f\left(c',t,\ell\right) & \leq \frac{1}{2c'}\left(t(t-1)\right)  \left(c' - 1\right)^{-2} \left( 1 - 2 \cdot O_c(t^{-1/3}) + O_c\left(t^{-2/3}\right) \right)^{-1} \left( 1 - O_c(t^{-1/3})\right)^{-1} \\
                                & =\frac{1}{2c'(c'-1)^2}\left(t(t-1)\right)  \left(1 + O_c(t^{-1/3})\right)^2                                                                                                   \\
                                & =\frac{1}{2c'(c'-1)^2}\left(t(t-1)\right)  \left(1 + O_c(t^{-1/3})\right)                                                                                                     \\
                                & = \left(\frac{1}{c'(c'-1)^2} + O_c(t^{-1/3}) \right) \binom{t}{2}.
    \end{align*}
    Recall that $c' \geq c - \frac{1}{t}$. Therefore,
    \begin{align*}
        f\left(c',t,\ell\right) & \leq \left(\frac{1}{c'(c'-1)^2} + O_c(t^{-1/3}) \right) \binom{t}{2}                                           \\
                                & \leq \left(\frac{1}{(c - \frac{1}{t})(c - \frac{1}{t}-1)^2} + O_c(t^{-1/3}) \right) \binom{t}{2}               \\
                                & = \left(\frac{1}{c(c-1)^2} \left(1 + O_c\left(\frac{1}{t}\right)\right)^3 + O_c(t^{-1/3}) \right) \binom{t}{2} \\
                                & = \left(\frac{1}{c(c-1)^2} \left(1 + O_c\left(\frac{1}{t}\right)\right) + O_c(t^{-1/3}) \right) \binom{t}{2}   \\
                                & =\left(\frac{1}{c(c-1)^2} + O_c(t^{-1/3}) \right) \binom{t}{2}
    \end{align*}
    as desired.
\end{proof}

\textbf{Acknowledgements.}
Thanks to David Wood for helpful comments on an early draft of this paper.

    {
        \fontsize{10pt}{11pt}
        \selectfont
        \bibliographystyle{DavidNatbibStyle}
        \bibliography{ref}
    }

\appendix
\end{document}